\newtheorem{theorem}{Theorem}[section]
\newtheorem{lemma}[theorem]{Lemma}
\newtheorem{remark}[theorem]{Remark}
\numberwithin{equation}{section}
\newtheorem{definition}[theorem]{Definition}
\newtheorem{corollary}[theorem]{Corollary}
\newtheorem{hypothesis}[theorem]{Hypothesis}
\begin{document}
 \begin{center}
 { \bf Some Estimates Regarding Integrated density of States for Random Schr\"{o}dinger Operator with decaying Random Potentials }
 \end{center}
\begin{center}
Dhriti Ranjan Dolai\\ The Institute of Mathematical Sciences\\ Taramani, Chennai - 600113, India\\
Email: dhriti@imsc.res.in
\end{center}
\clearpage

\begin{center}
{\bf Abstract} 
\end{center}
We investigate some bounds for the integrated density of states in the pure point regime for the random Schr\"{o}dinger 
operators with decaying random potentials, given by 
$H^{\omega}=-\Delta+\displaystyle\sum_{n\in\mathbb{Z}^d}a_nq_n(\omega)$, acting on $\ell^2(\mathbb{Z}^d)$, where $\{q_n\}_{n\in\mathbb{Z}^d}$ are i.i.d. random variables and $0<a_n\simeq|n|^{-\alpha},~~\alpha>0$.

\section{\bf Introduction}
\noindent The random Schr\"{o}dinger operator $H^\omega$ with decaying randomness on the Hilbert space $\ell^2(\mathbb{Z}^d)$ is  given by
\begin{equation}
\label{a}
H^{\omega}=-\Delta+V^{\omega},~~\omega \in \Omega.
\end{equation}
$\Delta$ is the adjacency operator defined by
$$(\Delta u)(n)=\displaystyle\sum_{|m-n|=1}u(m)~\forall~u\in\ell^2(\mathbb{Z}^d)$$
and 
\begin{equation}
\label{ran}
 V^{\omega}=\displaystyle\sum_{n\in\mathbb{Z}^d}a_nq_n(\omega)|\delta_n\rangle\langle\delta_n|,
\end{equation}
is the multiplication operator on $\ell^2(\mathbb{Z}^d)$ by the sequence $\{a_nq_n(\omega)\}_{n\in\mathbb{Z}^d}$. Here $\{\delta_n\}_{n\in\mathbb{Z}^d}$ is the standard basis for $\ell^2(\mathbb{Z}^d)$,
$\{a_n\}_{n\in\mathbb{Z}^d}$ is a sequence of positive real numbers such that $a_n\rightarrow 0$ as 
$|n|\rightarrow \infty$ and $\{q_n\}_{n\in\mathbb{Z}^d}$ are real valued iid random variables with an absolutely continuous 
probability distribution $\mu$ with bounded density. We realize $q_n$ as $\omega(n)$ on
$\big(\mathbb{R}^{\mathbb{Z}^d}, \mathcal{B}_{\mathbb{R}^{\mathbb{Z}^d}}, \mathbb{P})$, 
$\mathbb{P}=\bigotimes \mu$ constructed via Kolmogorov theorem. We refer to this probability space as 
$(\Omega,\mathcal{B},\mathbb{P})$ henceforth.\\
For any $B\subset\mathbb{Z}^d$ we consider the canonical orthogonal projection $\chi_B$ onto $\ell^2(B)$ and define the matrices
\begin{equation}
\label{sir1}
 H^{\omega}_B=\big(\langle\delta_n, H^{\omega}\delta_m\rangle\big)_{n,m\in B},~
G^B(z;n,m)=\langle\delta_{n},(H_B^{\omega}-z)^{-1}\delta_{m}\rangle,~G^B(z)=(H_B^{\omega}-z)^{-1}.
\end{equation}
$$
G(z)=(H^{\omega}-z)^{-1},~~G(z;n,m)=\langle\delta_{n},(H^{\omega}-z)^{-1}\delta_{m}\rangle, z\in\mathbb{C}^{+}.
$$
Note that $H^{\omega}_B$ is the matrix 
$$H^{\omega}_B=\chi_B H^{\omega}\chi_B~:~\ell^2(B)\longrightarrow\ell^2(B),~ a.e ~\omega.$$
One can note that the operators $\{H^{\omega}\}_{\omega\in\Omega}$ are self adjoint a.e $\omega$ and have a common core domain consisting of vectors with finite support.\\\\
Let $\Lambda_L$ denote the $d$-dimension box:
$$
\Lambda_L=\{(n_1,n_2,\cdots,n_d)\in\mathbb{Z}^d: |n_i|\leq L\}\subset\mathbb{Z}^d.
$$
We will work with the following hypothesis:
\begin{hypothesis} 
\label{hy}
(1) The measure $\mu$ is absolute continuous with density satisfies
\begin{equation}
\label{dc}
\rho(x) = \left\{
 \begin{array}{rl}
  0 & \text{if } |x|<1\\
    \frac{\delta-1}{2}~\frac{1}{|x|^{\delta}} & \text{if } |x|\ge1,~~forsome~\delta>1.
   \end{array} \right.
\end{equation}
(2) The sequence $a_n$ satisfy $a_n\simeq|n|^{-\alpha}$, $\alpha>0$.\\
(3) The pair $(\alpha,\delta)$ is chosen such that $d-\alpha(\delta-1)>0$ holds. This implies that $\beta_L\to\infty$ as $L\to\infty$, where $\beta_L$ is given by
\begin{align}
\label{od}
 \beta_L &=\sum_{n\in \Lambda_L} a_n^{\delta-1} \\
 &\simeq \displaystyle \sum_{n\in\Lambda_L}|n|^{-\alpha(\delta-1)} = O\bigg((2L+1)^{d-\alpha(\delta-1)}\bigg)\nonumber.
\end{align}
\end{hypothesis}
\begin{remark}
 \label{rem}
We have taken an explicit $\rho(x)$ in (\ref{dc}) in order to simplify the calculations in the proofs. Our results also hold for $\rho(x)=O\big(\frac{1}{|x|^\delta}\big),~~\delta>1
~~as~~|x|\to\infty$.
\end{remark}
\noindent In \cite{WMJ}, Kirsch-Krishna-Obermeit consider $H^{\omega}=-\Delta+V^{\omega}$ on $\ell^2(\mathbb{Z}^d)$ 
with the same $V^{\omega}$ as defined in (\ref{ran}). They showed that $\sigma(H^{\omega})=\mathbb{R}$ and 
$\sigma_c(H^{\omega})\subseteq[-2d,2d]$ a.e. $\omega$,
under some conditions on $\{a_n\}_{n \in \mathbb{Z}^d}$ and $\mu$ \big(The density of $\mu$ should not decay too fast at infinity and
$a_n$ should not decay too fast\big). For the precise condition on $a_n$'s and $\mu$ we recall
Definition 2.1 from \cite{WMJ}, which is given as follows.
\begin{definition}
 \label{a_nsuprt}
 Let $\{a_n\}$ be a bounded, positive sequence on $\mathbb{R}$. Then, $\big\{a_n\big\}-supp~\mu$ is defined by
 \begin{equation}
  \big\{a_n\big\}-supp~\mu:=\bigg\{x\in\mathbb{R}:\sum_n\mu\big(a_n^{-1}(x-\epsilon, x+\epsilon)\big)=\infty~\forall~\epsilon>0\bigg\}.
 \end{equation}
We call a probability measure $\mu$ asymptotically~large with respect to $a_n$ if\\ $\big\{a_{kn}\big\}-supp~\mu=\mathbb{R}$, for all $k\in\mathbb{Z}^+$.
\end{definition}
\noindent To show the existence of point spectrum outside $[-2d,2d]$ they verified Simon-Wolf
criterion \cite[Theorem 12.5]{BS} by showing exponential decay of the fractional moment of the Green function 
\cite[Lemma 3.2]{WMJ}. The decay is valid for $|n-m|>2R$ with energy $E\in\mathbb{R}\setminus[-2d,2d]$ and is given by
\begin{align}
\label{exdc}
\mathbb{E}^{\omega}(|G^{\Lambda_L}(E+i\epsilon:n,m)|^s)\leq D_{P(n.m)}e^{-c\big(\frac{|n-m|}{2}\big)},~~
E \in\mathbb{R}\setminus[-2d,2d],
\end{align}
where $\epsilon>0$, $0<s<1$, $c$ is a positive constant and $R\in \mathbb{Z}^+$. Here, $D_{P(n.m)}$ is a constant 
independent of $E$ and $\epsilon$, but polynomially bounded in $|n|$ and $|m|$.

\noindent Jak\v{s}i\'{c}-Last showed in \cite[Theorem 1.2]{JL} that for $d\ge3$, if $a_n\simeq |n|^{-\alpha}~~\alpha>1$ then
there is no singular spectrum inside $(-2d,2d)$ of $H^{\omega}$.

\noindent Here we take $(a_n,\mu)$ satisfying the condition given in \cite[Corollary 2.5]{WMJ} and Hypothesis \ref{hy}.
Then the spectrum of $H^{\omega}$ is $\mathbb{R}$ and $\sigma_c(H^{\omega})\subseteq [-2d,2d]$ a.e. $\omega$ (follows from \cite[Theorem 2.7]{WMJ}). 
We show that the average spacing of eigenvalues of $H^{\omega}_{\Lambda_L}$ near the energy $E\in \mathbb{R}\setminus[-2d,2d]$ are of order $\beta_L^{-1}$, whereas those close to $E\in[-2d,2d]$ have average spacing of the order $\frac{1}{(2L+1)^d}$.
This shows that the eigenvalues of $H^{\omega}_{\Lambda_L}$ are more densely distributed inside $[-2d,2d]$ where continuous part of spectrum of $H^{\omega}$ lies than the pure point regime which is outside $[-2d,2d]$.

\noindent We need following definitions before stating the results: 
\begin{equation}
\label{de1}
 N_L^{\omega}(E)=\#\big\{j:E_j\leq E,~E_j\in\sigma(H^{\omega}_{\Lambda_L})\big\},
\end{equation}
\begin{equation}
\label{de2}
 \tilde{N}_L^{\omega}(E)=\#\big\{j:E_j\ge E,~E_j\in\sigma(H^{\omega}_{\Lambda_L})\big\},
\end{equation}
\begin{equation}
 \label{de3}
\gamma_L(\cdot)=\frac{1}{\beta_L}\displaystyle
\sum_{n\in\Lambda_L}\mathbb{E}^{\omega}\big(\langle \delta_n, E_{H^{\omega}_{\Lambda_L}}(.)\delta_n\rangle\big).
\end{equation}
Our main results are as follows:
\begin{theorem}
 \label{th1}
If $E<-2d$ and $\epsilon=-2d-E>0$ then, we have 
$$\frac{1}{2}\frac{1}{(4d+\epsilon)^{(\delta-1)}}\leq
\varliminf_{L\to\infty}\frac{1}{\beta_L}\mathbb{E}^{\omega}(N^{\omega}_{L}(E))\leq
\varlimsup_{L\to\infty}\frac{1}{\beta_L}\mathbb{E}^{\omega}(N^{\omega}_{L}(E))
\leq \frac{1}{2}\frac{1}{\epsilon^{(\delta-1)}}.$$
For $E=2d+\epsilon>2d$ we have
$$\frac{1}{2}\frac{1}{(4d+\epsilon)^{(\delta-1)}}\leq
\varliminf_{L\to\infty}\frac{1}{\beta_L}\mathbb{E}^{\omega}(\tilde{N}^{\omega}_{L}(E))\leq
\varlimsup_{L\to\infty}\frac{1}{\beta_L}\mathbb{E}^{\omega}(\tilde{N}^{\omega}_{L}(E))
\leq \frac{1}{2}\frac{1}{\epsilon^{(\delta-1)}}.$$
\end{theorem}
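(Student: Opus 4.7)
The plan is to trap the counting function $N^{\omega}_L(E)$ between two deterministic counts of how many diagonal entries $a_n q_n(\omega)$ fall below shifted thresholds, exploiting $\|\Delta\|\le 2d$. Since $-\chi_{\Lambda_L}\Delta\chi_{\Lambda_L}$ has norm at most $2d$ on $\ell^2(\Lambda_L)$, the operator inequality
$$
-2d\,I + V^{\omega}_{\Lambda_L}\ \le\ H^{\omega}_{\Lambda_L}\ \le\ 2d\,I + V^{\omega}_{\Lambda_L}
$$
holds. Because $V^{\omega}_{\Lambda_L}$ is diagonal, its eigenvalues in increasing order are precisely the sorted values $\lambda_1\le\cdots\le\lambda_{|\Lambda_L|}$ of $\{a_n q_n(\omega)\}_{n\in\Lambda_L}$, so the min-max principle gives $-2d+\lambda_j\le E_j\le 2d+\lambda_j$ for every $j$. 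Rephrased as a counting statement, this is the deterministic sandwich
$$
\#\{n\in\Lambda_L:\ a_n q_n(\omega)\le E-2d\}\ \le\ N^{\omega}_L(E)\ \le\ \#\{n\in\Lambda_L:\ a_n q_n(\omega)\le E+2d\}.
$$

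Next I would take expectations and compute tails from (\ref{dc}). For any $c\ge a_n$ a direct integration yields
$$
\mathbb{P}(a_n q_n\le -c)=\int_{-\infty}^{-c/a_n}\rho(x)\,dx=\tfrac{1}{2}\,a_n^{\delta-1}\,c^{-(\delta-1)},
$$
and in all cases this probability is at most $\tfrac{1}{2}$. Taking $E=-2d-\epsilon$ and specialising to $c=\epsilon$ (for the upper bound) and $c=4d+\epsilon$ (for the lower bound), I sum over $n\in\Lambda_L$. Since $a_n\to 0$, only finitely many sites fail the condition $a_n\le c$, so their contribution is an $L$-independent constant $C(\epsilon,d,\delta)$, while the remaining sum reproduces $\tfrac{1}{2}c^{-(\delta-1)}\beta_L$ by the definition of $\beta_L$. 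Hence
$$
\tfrac{1}{2}(4d+\epsilon)^{-(\delta-1)}\,\beta_L-C\ \le\ \mathbb{E}^{\omega}(N^{\omega}_L(E))\ \le\ \tfrac{1}{2}\,\epsilon^{-(\delta-1)}\,\beta_L+C,
$$
and dividing by $\beta_L$ with $\beta_L\to\infty$ from Hypothesis~\ref{hy}(3) delivers the first claim.

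The case $E=2d+\epsilon>2d$ is entirely symmetric: reading the same operator sandwich from the top of the spectrum gives
$$
\#\{n:\ a_n q_n\ge E+2d\}\ \le\ \tilde N^{\omega}_L(E)\ \le\ \#\{n:\ a_n q_n\ge E-2d\},
$$
and since $\rho$ is even, the identical tail estimate $\mathbb{P}(a_n q_n\ge c)=\tfrac{1}{2}a_n^{\delta-1}c^{-(\delta-1)}$ for $c\ge a_n$ delivers the analogous bounds for $\tilde N^{\omega}_L(E)$.

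I do not anticipate a serious obstacle; the whole argument is a deterministic min-max sandwich followed by an explicit Pareto-tail sum. The single point demanding care is the handful of sites where $a_n$ is not yet small compared with the shift (so the closed-form tail for $\mathbb{P}(q_n\le -c/a_n)$ fails, because of the cutoff $|x|\ge 1$ in $\rho$), but since $a_n\to 0$ there are only finitely many such sites, and their contribution is $L$-independent and therefore washed out after dividing by $\beta_L\to\infty$.
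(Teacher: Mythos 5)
Your proposal is correct and follows essentially the same route as the paper: the operator sandwich $\pm 2d\,I + V^{\omega}_{\Lambda_L}$ around $H^{\omega}_{\Lambda_L}$, the min--max comparison of counting functions, and the explicit Pareto-tail computation with the finitely many sites where $a_n$ is not yet small handled separately and washed out by $\beta_L\to\infty$. The only cosmetic difference is that you phrase the comparison directly as a sandwich of counting functions rather than introducing the auxiliary operators $A^{\omega}_{L,\pm}$ and their counting functions $N^{\omega}_{\pm,L}$ as the paper does.
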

\noindent Now we investigate the average number of eigenvalues of $H^{\omega}_{\Lambda_L}$ inside $[-2d,2d]$, which can be given as follows:
\begin{corollary}
 \label{cor3}
For any interval $(M_1,M_2)\varsupsetneq[-2d,2d]$ we have
\begin{equation}
 \label{l3}
\lim_{L\to\infty}\frac{1}{(2L+1)^d}\mathbb{E}^{\omega}\big(\# \big\{\sigma(H^{\omega}_{\Lambda_L})\cap (M_1,M_2)\big\}\big)=1.
\end{equation}
\end{corollary}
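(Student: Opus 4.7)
The plan is to use Theorem \ref{th1} as a black box, together with the trivial observation that $H^\omega_{\Lambda_L}$ is a matrix of size $(2L+1)^d\times(2L+1)^d$, so it has exactly $(2L+1)^d$ eigenvalues counted with multiplicity. Fix $M_1<-2d$ and $M_2>2d$, and set $\epsilon_1=-2d-M_1>0$, $\epsilon_2=M_2-2d>0$.

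First I would partition the spectrum of $H^\omega_{\Lambda_L}$ into the three regions $\{E_j\le M_1\}$, $\{M_1<E_j<M_2\}$, $\{E_j\ge M_2\}$. Since the single-site distribution $\mu$ is absolutely continuous, the joint distribution of the eigenvalues of $H^\omega_{\Lambda_L}$ is absolutely continuous, so for a.e.~$\omega$ no eigenvalue equals $M_1$ or $M_2$. Thus, a.e.,
\begin{equation*}
(2L+1)^d \;=\; N^\omega_L(M_1)\;+\;\#\bigl\{\sigma(H^\omega_{\Lambda_L})\cap(M_1,M_2)\bigr\}\;+\;\tilde N^\omega_L(M_2).
\end{equation*}
Taking expectation and dividing by $(2L+1)^d$ reduces the claim to showing
\begin{equation*}
\frac{\mathbb{E}^\omega\bigl(N^\omega_L(M_1)\bigr)}{(2L+1)^d}\;\longrightarrow\;0,\qquad \frac{\mathbb{E}^\omega\bigl(\tilde N^\omega_L(M_2)\bigr)}{(2L+1)^d}\;\longrightarrow\;0.
\end{equation*}

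Next I would apply Theorem \ref{th1} to the left tail at $E=M_1$ and to the right tail at $E=M_2$. It yields
\begin{equation*}
\varlimsup_{L\to\infty}\frac{\mathbb{E}^\omega(N^\omega_L(M_1))}{\beta_L}\;\le\;\frac{1}{2\,\epsilon_1^{\delta-1}},\qquad
\varlimsup_{L\to\infty}\frac{\mathbb{E}^\omega(\tilde N^\omega_L(M_2))}{\beta_L}\;\le\;\frac{1}{2\,\epsilon_2^{\delta-1}},
\end{equation*}
so both expectations are $O(\beta_L)$. By part (3) of Hypothesis \ref{hy} we have $\beta_L=O\bigl((2L+1)^{d-\alpha(\delta-1)}\bigr)$, and since $\alpha(\delta-1)>0$ we deduce $\beta_L/(2L+1)^d\to 0$. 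Combining the two displayed bounds with this ratio gives the desired vanishing of the outer two terms, and the middle term therefore tends to $1$.

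There is no real obstacle beyond invoking Theorem \ref{th1}; the only point requiring a sentence of care is the null-probability of the boundary values $M_1$, $M_2$ being eigenvalues (handled via absolute continuity of $\mu$) and the elementary observation that the asymptotic density $\beta_L/(2L+1)^d$ tends to zero, which is precisely where the hypothesis $\alpha(\delta-1)>0$ enters. The corollary then confirms that the bulk of the eigenvalues of $H^\omega_{\Lambda_L}$ lie inside (a neighborhood of) $[-2d,2d]$ with mean density one, consistent with the continuous spectrum of $H^\omega$ being contained there.
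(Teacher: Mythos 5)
Your argument is correct and essentially identical to the paper's proof: partition the $(2L+1)^d$ eigenvalues into the two tails and the middle region, bound the expected tail counts by $O(\beta_L)$ via Theorem \ref{th1}, and conclude using $\beta_L/(2L+1)^d\to 0$ from Hypothesis \ref{hy}. The only superfluous step is the appeal to absolute continuity of $\mu$ to exclude eigenvalues at $M_1$ or $M_2$: since $(-\infty,M_1]$, $(M_1,M_2)$ and $[M_2,\infty)$ already partition $\mathbb{R}$, the counting identity holds for every $\omega$.
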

\begin{corollary}
\label{cor}
 If $M_1<-2d$ and $M_2>2d$ then, we have
\begin{equation}
 \label{vague}
\varlimsup_{L\to\infty}\gamma_L\big((-\infty,M_1]\cup[M_2,\infty)\big)\leq 
\frac{1}{2}\bigg[\frac{1}{(-2d-M_1)^{(\delta-1)}}+\frac{1}{(M_2-2d)^{(\delta-1)}}\bigg]
\end{equation}
For any interval $I\subseteq\mathbb{R}\setminus[-2d,2d]$ of length $|I|>4d$ there is a constant $C_I>0$ such that
\begin{equation}
 \label{nontri}
\varliminf_{L\to\infty}\gamma_L(I)\ge C_I>0.
\end{equation}
\end{corollary}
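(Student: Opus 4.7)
I would begin by observing that the definitions \eqref{de1}--\eqref{de3} give
$$\gamma_L((-\infty, E]) = \frac{1}{\beta_L}\mathbb{E}^\omega(N_L^\omega(E)) \quad \text{and} \quad \gamma_L([E, \infty)) = \frac{1}{\beta_L}\mathbb{E}^\omega(\tilde{N}_L^\omega(E)),$$
so that Theorem \ref{th1} is directly a statement about the measure $\gamma_L$ evaluated on half-lines outside $[-2d,2d]$. Since $\mu$ is absolutely continuous, $H^\omega_{\Lambda_L}$ a.s.\ has no eigenvalue at any prescribed energy, so $\gamma_L(\{E\}) = 0$ for every fixed $E$, and I may freely interchange open, closed, and half-open endpoints in what follows.

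For inequality \eqref{vague}, since $(-\infty, M_1]$ and $[M_2, \infty)$ are disjoint, I split
$$\gamma_L((-\infty, M_1] \cup [M_2, \infty)) = \gamma_L((-\infty, M_1]) + \gamma_L([M_2, \infty)),$$
apply the upper bound of Theorem \ref{th1} to each summand with $\epsilon = -2d - M_1$ and $\epsilon = M_2 - 2d$ respectively, and invoke subadditivity of $\varlimsup$ to conclude.

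For inequality \eqref{nontri}, since $I$ is an interval disjoint from $[-2d,2d]$, either $I \subseteq (-\infty, -2d)$ or $I \subseteq (2d, \infty)$; I treat the former (the latter is symmetric). Let the endpoints of $I$ be $a < b < -2d$, and set $\epsilon_b := -2d - b$ and $\epsilon_a := -2d - a$, so that $|I| = \epsilon_a - \epsilon_b$. Writing $\gamma_L(I) = \gamma_L((-\infty, b]) - \gamma_L((-\infty, a])$ and using $\varliminf(x_L - y_L) \geq \varliminf x_L - \varlimsup y_L$ together with the lower and upper bounds of Theorem \ref{th1} at $b$ and $a$ respectively yields
$$\varliminf_{L \to \infty} \gamma_L(I) \;\geq\; \frac{1}{2(4d + \epsilon_b)^{\delta-1}} \;-\; \frac{1}{2 \epsilon_a^{\delta-1}}.$$
The hypothesis $|I| > 4d$ is exactly $\epsilon_a > 4d + \epsilon_b$, which (since $\delta > 1$) forces this right-hand side to be strictly positive; I then define $C_I$ to be its value.

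The argument is purely a bookkeeping step on top of Theorem \ref{th1}, so no substantive new analytic obstacle arises. The only point worth flagging is that the gap $|I| > 4d$ is dictated by the mismatch between the upper bound $\tfrac{1}{2\epsilon^{\delta-1}}$ and the lower bound $\tfrac{1}{2(4d + \epsilon)^{\delta-1}}$ of Theorem \ref{th1}: the extra $4d$ in the lower-bound denominator reflects that it is governed by the distance from $E$ to the \emph{far} edge $2d$ of $[-2d, 2d]$, and overcoming this mismatch is precisely what the hypothesis $|I| > 4d$ accomplishes.
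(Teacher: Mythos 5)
Your proposal is correct and follows essentially the same route as the paper: both split the union into the two half-lines and apply the upper bounds of Theorem \ref{th1} for \eqref{vague}, and both write $\gamma_L(I)$ as a difference of two cumulative counts and pair the lower bound at the near endpoint against the upper bound at the far endpoint (equivalently, the paper's comparison $N^{\omega}_{+,L}(E_2)-N^{\omega}_{-,L}(E_1)$ via \eqref{a2}, \eqref{apl3}, \eqref{apl4}), with $|I|>4d$ guaranteeing positivity for exactly the reason you identify. Your closing remark on the origin of the $4d$ threshold and the observation that $\gamma_L(\{E\})=0$ (which the paper leaves implicit when passing between closed and half-open intervals) are accurate refinements, not deviations.
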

\begin{corollary}
\label{convague}
Let $M_1<-2d$ and $M_2>2d$ and $\gamma_L\upharpoonright_{(M_1,M_2)^c}$ denote the restriction of 
$\gamma_L$ to $\mathbb{R}\setminus(M_1,M_2)$.
The sequence of measure $\big\{\gamma_L\upharpoonright_{(M_1,M_2)^c}\big\}_L$ admits a subsequence which converges vaguely to a 
 non-trivial measure, say $\gamma$.
\end{corollary}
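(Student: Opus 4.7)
The plan is to combine the two bounds of Corollary \ref{cor} with the standard sequential compactness of positive Radon measures in the vague topology.

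First, by the upper estimate \eqref{vague} there is a constant $C=C(M_1,M_2,\delta)$ such that
\[
\gamma_L\big((M_1,M_2)^c\big)\;\le\;C \qquad\text{for all sufficiently large } L .
\]
Hence the family $\{\gamma_L\upharpoonright_{(M_1,M_2)^c}\}_L$ consists of positive Radon measures on $\mathbb{R}$ whose total masses are uniformly bounded; in particular they are uniformly bounded on every compact set. This is the only point at which the upper bound is used.

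Next I would apply the Banach--Alaoglu theorem in its Radon-measure incarnation (equivalently Helly's selection principle for positive Radon measures on a locally compact Hausdorff space): the closed ball of radius $C$ in the cone of positive Radon measures is sequentially compact for the vague topology. This produces a subsequence $L_k\to\infty$ and a positive Radon measure $\gamma$, automatically supported in the closed set $(M_1,M_2)^c$, with $\gamma_{L_k}\upharpoonright_{(M_1,M_2)^c}\to\gamma$ vaguely.

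Finally, to prove non-triviality I would invoke \eqref{nontri}. Pick a bounded interval $I\subset\mathbb{R}\setminus[-2d,2d]$ with $|I|>4d$ and $I\subset(M_1,M_2)^c$, e.g. $I=(M_2,\,M_2+4d+1)$. Its closure $\bar I$ is compact, so the one-sided Portmanteau inequality for vague convergence of positive Radon measures (for compact $K$, $\gamma(K)\ge \varlimsup_k\gamma_{L_k}(K)$) gives
\[
\gamma(\bar I)\;\ge\;\varlimsup_k\gamma_{L_k}(\bar I)\;\ge\;\varliminf_k\gamma_{L_k}(I)\;\ge\;\varliminf_L\gamma_L(I)\;\ge\;C_I\;>\;0,
\]
so $\gamma\neq 0$.

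There is no deep obstacle here; the whole statement is a packaging of Corollary \ref{cor}. The only point that needs a little care is that because the total mass of the unrestricted $\gamma_L$ on $\mathbb{R}$ equals $(2L+1)^d/\beta_L\to\infty$, one must argue throughout in the \emph{vague} topology rather than the weak topology for finite measures, and the Portmanteau step must be invoked for compact sets (not merely closed sets) to ensure the lower bound $C_I$ is inherited by the limit.
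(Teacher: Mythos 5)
Your proof is correct and follows essentially the same route as the paper: the uniform mass bound from \eqref{vague} plus Banach--Alaoglu compactness in the vague topology, with non-triviality supplied by \eqref{nontri}. You in fact streamline the paper's write-up slightly --- since the restricted measures have uniformly bounded total mass you can dispense with the paper's decomposition of $(M_1,M_2)^c$ into countably many compacts and the diagonal extraction, and you make explicit the Portmanteau step $\gamma(K)\ge\varlimsup_k\gamma_{L_k}(K)$ for compact $K$ that the paper leaves implicit when passing the lower bound $C_I$ to the limit measure.
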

\noindent The above theorem give estimates for the average of $N_L^{\omega}(E)$ and $\tilde{N}_L^{\omega}(E)$,
but we can also get a point-wise estimate of the above quantities which is given by following theorem.
\begin{theorem}
 \label{th2}
For $d\ge2$, $0<\alpha<\frac{1}{2}$ and $1<\delta<\frac{1}{2\alpha}$ then for almost all $\omega$
$$\frac{1}{2}\frac{1}{(2d-E)^{(\delta-1)}}\leq \varliminf_{L\to\infty}\frac{1}{\beta_L}N^{\omega}_{L}(E)
\leq \varlimsup_{L\to\infty}\frac{1}{\beta_L}N^{\omega}_{L}(E)\leq \frac{1}{2}\frac{1}{(-2d-E)^{(\delta-1)}}~~for~E<-2d,$$
$$
\frac{1}{2}\frac{1}{(2d+E)^{(\delta-1)}}\leq\varliminf_{L\to\infty}\frac{1}{\beta_L}\tilde{N}^{\omega}_{L}(E)
\leq \varlimsup_{L\to\infty}\frac{1}{\beta_L}\tilde{N}^{\omega}_{L}(E)\leq \frac{1}{2}\frac{1}{(E-2d)^{(\delta-1)}}~~for~E>2d.
$$
\end{theorem}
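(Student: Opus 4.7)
The plan is to derive Theorem \ref{th2} from Theorem \ref{th1} by a concentration-of-measure argument, so that the almost sure bounds match the bounds already established for the expectation. The key observation is a rank-one perturbation bound: if $\omega,\omega' \in \Omega$ agree except at a single coordinate $n_0 \in \Lambda_L$, then
$$H_{\Lambda_L}^{\omega'} - H_{\Lambda_L}^{\omega} = a_{n_0}\bigl(q_{n_0}(\omega') - q_{n_0}(\omega)\bigr)|\delta_{n_0}\rangle\langle\delta_{n_0}|$$
is a rank-one self-adjoint perturbation, so Weyl interlacing yields
$$\bigl|N_L^{\omega'}(E) - N_L^{\omega}(E)\bigr| \leq 1 \qquad \text{for every } E \in \mathbb{R},$$
uniformly in the magnitude of the change at $n_0$. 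Thus $\omega \mapsto N_L^\omega(E)$, viewed as a function of the independent coordinates $\{q_n\}_{n\in\Lambda_L}$, has the bounded differences property with constants $c_n = 1$.

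Applying McDiarmid's inequality (which requires bounded differences, not bounded coordinates, and therefore tolerates the heavy-tailed distribution $\mu$) yields, for each fixed $E$ and $\epsilon > 0$,
$$\mathbb{P}\left(\left|\frac{N_L^\omega(E) - \mathbb{E}^\omega N_L^\omega(E)}{\beta_L}\right| > \epsilon\right) \leq 2\exp\left(-\frac{2\epsilon^2 \beta_L^2}{(2L+1)^d}\right).$$
From Hypothesis \ref{hy}(3), $\beta_L^2/(2L+1)^d \simeq (2L+1)^{d - 2\alpha(\delta-1)}$, and the restrictions $d \geq 2$, $0 < \alpha < 1/2$, $1 < \delta < 1/(2\alpha)$ force $2\alpha(\delta-1) < 1 - 2\alpha < 1 \leq d$, so the exponent is a strictly positive power of $L$ and the right-hand side is summable. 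Borel--Cantelli (applied along $\epsilon = 1/k$, $k \in \mathbb{Z}^+$) then gives, almost surely,
$$\lim_{L \to \infty} \frac{1}{\beta_L}\bigl(N_L^\omega(E) - \mathbb{E}^\omega N_L^\omega(E)\bigr) = 0.$$
Combining this with the bounds on $\mathbb{E}^\omega N_L^\omega(E)/\beta_L$ from Theorem \ref{th1} yields the desired $\varliminf$/$\varlimsup$ inequalities at the fixed $E$.

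To produce a single null set valid for every $E < -2d$ simultaneously, I would run the above on a countable dense set $\mathcal{E} \subset (-\infty,-2d)$ and exploit the monotonicity of $E \mapsto N_L^\omega(E)$ together with the continuity of $E \mapsto (2d-E)^{-(\delta-1)}$ and $E \mapsto (-2d-E)^{-(\delta-1)}$ to sandwich the inequalities at arbitrary $E$ between values attained at points of $\mathcal{E}$. The case $E > 2d$ is handled identically, with $\tilde N_L^\omega$ in place of $N_L^\omega$; Weyl interlacing gives the analogous rank-one bound for $\tilde N_L^{\omega}$.

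The main obstacle is the interplay between the unbounded coordinates $q_n$ and the concentration bound: a variance-based argument would invoke moments of $q_n$, which are finite only up to order $\delta - 1$, and the exponential tail required by Azuma--Hoeffding in its plain form is similarly unavailable. The rank-one interlacing bound is what makes the approach work, since it produces a bounded-differences constant that is completely insensitive to the size of each $q_n$; the extra hypothesis $\delta < 1/(2\alpha)$ then plays the single role of ensuring that $\beta_L^2/(2L+1)^d$ diverges as a positive power of $L$, which is precisely what the Borel--Cantelli step requires.
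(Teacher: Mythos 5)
Your proposal is correct, but it takes a genuinely different route from the paper. The paper never concentrates $N_L^{\omega}(E)$ itself: it re-uses the min--max sandwich $A^{\omega}_{L,-}\leq H^{\omega}_{\Lambda_L}\leq A^{\omega}_{L,+}$ from the proof of Theorem \ref{th1}, so that the relevant counting functions $N^{\omega}_{\pm,L}(E)$ become sums of independent indicators $X_n=\chi_{\{q_n\leq -a_n^{-1}\epsilon\}}$; it then applies Chebyshev's inequality to these sums, bounding the variance by $\mathbb{E}^{\omega}(N^{\omega}_{-,L}(E))\simeq \beta_L$, so that Borel--Cantelli requires the convergence of $\sum_L \beta_L^{-1}\simeq\sum_L L^{-(d-\alpha(\delta-1))}$ --- this is exactly where the hypotheses $d\ge2$, $\alpha<\tfrac12$, $\delta<\tfrac{1}{2\alpha}$ enter, forcing $d-\alpha(\delta-1)>1$. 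Your rank-one interlacing plus McDiarmid argument instead concentrates $N_L^{\omega}(E)$ directly around its mean and only needs $\beta_L^2/(2L+1)^d\simeq (2L+1)^{d-2\alpha(\delta-1)}\to\infty$, i.e.\ $d>2\alpha(\delta-1)$, which under the stated hypotheses holds already for $d=1$; so your argument is both quantitatively stronger (exponential rather than polynomial tail bounds, hence a condition on the exponents that is strictly weaker than the paper's $d-\alpha(\delta-1)>1$) and structurally cleaner, since it bypasses the comparison operators $A^{\omega}_{L,\pm}$ in the concentration step (you still need them, via Theorem \ref{th1}, to identify the limiting upper and lower bounds for $\mathbb{E}^{\omega}(N_L^{\omega}(E))/\beta_L$). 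Your extra step producing a single null set for all $E$ simultaneously via a countable dense set and monotonicity is a refinement the paper does not carry out; the paper's proof, like the core of yours, fixes $E$ and produces an $E$-dependent full-measure set.
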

~\\
\noindent In \cite{AAM}, Figotin-Germinet-Klein-M\"{u}ller studied the Anderson Model on $L^2(\mathbb{R}^d)$ with decaying random potentials given by
$$
H^{\omega}=-\Delta+\lambda\gamma_{\alpha}V^{\omega}~~on~~L^2(\mathbb{R}^d),
$$
where $\lambda>0$ is the disorder parameter and $\gamma_{\alpha}$ is the envelope function 
$$
\gamma_{\alpha}(x):=(1+|x|^2)^{-\frac{\alpha}{2}},~~ \alpha\ge0.
$$
They assumed that the density of the single site distribution is compact supported $L^{\infty}$ function. They showed that for $\alpha\in(0,2)$ the operator $H^{\omega}$ has infinitely many eigenvalues in $(-\infty,0)$ a.e. $\omega$. In \cite[Theorem 3]{AAM}, they
gave the bound for $N^{\omega}(E),~E<0$ (number of eigenvalues of $H^\omega$ below $E$) in terms of density of states for the stationary 
(i.i.d. case) Model.

\noindent In \cite{GJMS}, Gordon-Jaksi\'{c}-Molchanov-Simon studied the Model given by 
$$H^{\omega}=-\Delta+\displaystyle\sum_{n\in\mathbb{Z}^d}(1+|n|^{\alpha})q_n(\omega),~~\alpha>0~~on~~\ell^2(\mathbb{Z}^d),$$
where $\{q_n\}$ are i.i.d. random variables uniformly distributed on $[0,1]$.
They showed that if $\alpha>d$ then $H^{\omega}$ has discrete spectrum a.e. $\omega$. For the case when $\alpha\leq d$ they construct
a strictly decreasing sequence $\{a_k\}_{k\in\mathbb{N}}$ of positive numbers such that if $\frac{d}{k}\ge\alpha>\frac{d}{k+1}$
then for a.e. $\omega$ we have the following:\\
(i) $\sigma(H^{\omega})=\sigma_{pp}(H^{\omega})$ and eigenfunctions of $H^{\omega}$ decay exponentially,\\
(ii) $\sigma_{ess}(H^{\omega})=[a_k,\infty)$ and\\
(iii) $\#\sigma_{disc}(H^{\omega})<\infty$.\\
They also showed that \\
(a) If $\frac{d}{k}>\alpha>\frac{d}{k+1}$ and $E\in(a_j,a_{j-1})$, $1\leq j\leq k$, then
$$
\lim_{L\to\infty}\frac{N^{\omega}_L(E)}{L^{d-j\alpha}}=N_j(E)
$$
exists for a.e. $\omega$ and is a non random function.\\
(b) If $\alpha=\frac{d}{k}$ and $E\in(a_j,a_{j-1})$, $1\leq j<k$ the above is valid. If $E\in(a_k,a_{k-1})$ then
$$
\lim_{L\to\infty}\frac{N^{\omega}_L(E)}{lnL}=N_k(E)
$$
exists for a.e. $\omega$ and is a non random function.

\noindent In this work, we essentially show that for decaying potentials the confinement length is $(2L+1)^d$ inside $[-2d,2d]$ and $\beta_L$ 
outside $[-2d,2d]$. On the other hand, for the growing potentials (as in \cite{GJMS}), the confinement length is a function of energy.
\section{\bf On the pure point and continuous spectrum}
\noindent In this section, we work out the spectrum of $H^{\omega}$ under the Hypothesis \ref{hy}. Here we use \cite[Corollary 2.5]{WMJ} and \cite[Theorem 2.3]{WMJ}\\\\
Let $x<0$ and $\epsilon>0$ such that $x+\epsilon<0$ then, for large enough $|n|\ge M$ we have $a_n^{-1}(x+\epsilon)\leq-1$ since
$a_n^{-1}\to\infty$ as $|n|\to\infty$. For $|n|\ge M$ we have
\begin{align*}
\mu\bigg(\frac{1}{a_n}(x-\epsilon,x+\epsilon)\bigg) &= 
\int_{a_n^{-1}(x-\epsilon)}^{a_n^{-1}(x+\epsilon)}\rho(t)dt\\
 &=a_n^{(\delta-1)}\frac{\delta-1}{2}\int_{x-\epsilon}^{x+\epsilon}\frac{1}{|t|^{\delta}}dt~~~~~(using~\ref{dc}).
\end{align*}
Hence,
\begin{equation}
\label{an1}
 \sum_{n\in\mathbb{Z}^d}\mu\bigg(\frac{1}{a_n}(x-\epsilon,x+\epsilon)\bigg)\ge
\frac{\delta-1}{2}\int_{x-\epsilon}^{x+\epsilon}\frac{1}{|t|^{\delta}}dt\sum_{|n|\ge M}a_n^{(\delta-1)}=\infty,
\end{equation}
since $\beta_L=\displaystyle\sum_{n\in\Lambda_L}a_n^{(\delta-1)}\to\infty$ as $L\to\infty$ (using \ref{od}).\\
For $x>0$, a similar calculation will give 
\begin{equation}
\label{an2}
\sum_{n\in\mathbb{Z}^d}\mu\bigg(\frac{1}{a_n}(x-\epsilon,x+\epsilon)\bigg)=\infty,~~\epsilon>0.
\end{equation}
Now let $\epsilon>0$, there exist $M$ such that $a_n^{-1}\epsilon>1$ for $|n|\ge M$.
So, we have
\begin{align*}
\sum_{n\in\mathbb{Z}^d}\mu\bigg(\frac{1}{a_n}(-\epsilon,\epsilon)\bigg) &\ge \sum_{|n|\ge M}\mu(-a_n^{-1}\epsilon,a_n^{-1}\epsilon)\\
 &=2\sum_{|n|\ge M}\frac{\delta-1}{2} \int_1^{a_n^{-1}\epsilon}\frac{1}{t^\delta}dt\\
 &=\sum_{|n|\ge M}(1-\epsilon^{1-\delta}a_n^{\delta-1}).
\end{align*}
Since, $\displaystyle\sum_{n\in\Lambda_L}(1-\epsilon^{1-\delta}a_n^{\delta-1})\approx
\big[(2L+1)^d-(2L+1)^{d-\alpha(\delta-1)}\big]\to\infty$ as $L\to\infty$, it follows that
\begin{equation}
\label{an3}
\sum_{n\in\mathbb{Z}^d}\mu\bigg(\frac{1}{a_n}(-\epsilon,\epsilon)\bigg)=\infty.
\end{equation}
If $0<\epsilon_1<\epsilon_2$ then, we have
$$
\mu\bigg(a_n^{-1}(x-\epsilon_1,x+\epsilon_1)\bigg)\leq \mu\bigg(a_n^{-1}(x-\epsilon_2,x+\epsilon_2)\bigg)~~\forall~~x\in\mathbb{R}.
$$
Using the above inequality together with (\ref{an1}), (\ref{an2}) and (\ref{an3}) we have,
\begin{equation}
 \label{ans}
\sum_{n\in\mathbb{Z}^d}\mu\bigg(a_n^{-1}(x-\epsilon,x+\epsilon)\bigg)=\infty, \mbox{~for~all~} x\in\mathbb{R}\ \&\ \epsilon>0.
\end{equation}
Then, using (\ref{ans}) from \cite[Definition 2.1]{WMJ} we see that
\begin{equation*}
 M=\cap_{k\in\mathbb{Z}^{+}}(a_{kn}-supp~\mu)=\mathbb{R}.
\end{equation*}
Therefore, \cite[Corollary 2.5]{WMJ} and \cite[Theorem 2.3]{WMJ} will give the following description about the spectrum of $H^{\omega}$.
$$\sigma_{ess}(H^{\omega})=[-2d,2d]+\mathbb{R}=\mathbb{R}~and~\sigma_c(H^{\omega})\subseteq[-2d,2d]~a.e~\omega.$$

\section{ \bf Proof of main results}
\noindent {\bf Proof of Theorem \ref{th1}.}
~\\Define
$$
A^{\omega}_{L,\pm}=\pm 2d +\displaystyle\sum_{n\in\Lambda_L}a_nq_n(\omega)P_{\delta_n}.
$$
and
$$
N^{\omega}_{\pm, L}(E)=\#\{j;~E_j\leq E,~E_j\in \sigma(A^{\omega}_{L,\pm})\},~
N^{\omega}_{L}(E)=\#\{j:E_j\leq E,~E_j\in \sigma(H^{\omega}_{\Lambda_L})\}.
$$
Since $\sigma(\Delta)=[-2d,2d]$, following operator inequality
\begin{equation}
 \label{a1}
A^{\omega}_{L,-}\leq H^{\omega}_{\Lambda_L}\leq A^{\omega}_{L,+}.
\end{equation}
is there, with
$$
H^{\omega}_{\Lambda_L}=\chi_{\Lambda_L}\Delta\chi_{\Lambda_L}+\displaystyle\sum_{n\in\Lambda_L}a_nq_n(\omega)P_{\delta_n}.
$$
A simple application of the min-max principle \cite[Theorem 6.44]{KT} shows that
\begin{equation}
\label{a2}
 N^{\omega}_{+,L}(E)\leq N^{\omega}_{L}(E)\leq N^{\omega}_{-,L}(E).
\end{equation}
Now, the spectrum $\sigma(A^{\omega}_{L,\pm})$ of $A^{\omega}_{L,\pm}$ consists of only eigenvalues and is given by
$$
\sigma(A^{\omega}_{L,\pm})=\{n\in\Lambda_L: \pm 2d+a_nq_n(\omega)\}.
$$
Let $E<-2d$ with $E=-2d-\epsilon$, for some $\epsilon>0$. Then,
\begin{align}
\label{a4}
 N^{\omega}_{-,L}(E)&=\#\{n\in\Lambda_L: -2d+a_nq_n(\omega)\leq-2d-\epsilon\}\\
  &=\#\{n\in\Lambda_L: q_n(\omega)\in(-\infty,-a_n^{-1}\epsilon]\}\nonumber\\
 &=\sum_{n\in\Lambda_L}\chi_{_{\{\omega:q_n(\omega)\in(-\infty,-a_n^{-1}\epsilon]\}}}.\nonumber
\end{align}
Since $q_n$ are i.i.d, if we take expectation of both sides of (\ref{a4}) we get
\begin{align}
\mathbb{E}^{\omega}(N^{\omega}_{-,L}(E))&=\sum_{n\in\Lambda_L}\mu(-\infty, -a_n^{-1}\epsilon]\\
&=\sum_{n\in\Lambda_L}\int_{-\infty}^{-a_n^{-1}\epsilon}\rho(x)dx.\nonumber
\end{align}
Since $a_n^{-1}\to\infty$ as $|n|\to\infty$ and $\epsilon>0$, there exist an $M\in\mathbb{N}$ such that
$$
a_n^{-1}\epsilon>1,~-a_n^{-1}\epsilon<-1~~\forall~|n|>M.
$$
Therefore for large $L$, from (\ref{a4}) we get
\begin{align}
\label{ap0}
\mathbb{E}^{\omega}(N^{\omega}_{-,L}(E))&=\sum_{n\in\Lambda_L}\int_{-\infty}^{-a_n^{-1}\epsilon}\rho(x)dx\\
&=\sum_{n\in\Lambda_L,~|n|>M}\int_{-\infty}^{-a_n^{-1}\epsilon}\rho(x)dx+
\sum_{n\in\Lambda_L,~|n|\leq M}\int_{-\infty}^{-1}\rho(x)dx.
\end{align}
Since $\#\{n\in\mathbb{Z}^d:|n|\leq M\}\leq (2M+1)^d$, we have
\begin{align}
 \label{ap}
\sum_{n\in\Lambda_L,~|n|\leq M}\int_{-\infty}^{-1}\rho(x)dx &\leq(2M+1)^d\int_{-\infty}^{-1}\rho(x)dx\\
&=(2M+1)^d ~~\frac{\delta-1}{2}\int_{-\infty}^{-1}\frac{1}{|x|^{\delta}}dx\nonumber\\
&=\frac{(2M+1)^d}{2}\nonumber,~~\delta>1~is~given.
\end{align}
using \eqref{od} on (\ref{ap}) we have
\begin{equation}
 \label{apl1}
\lim_{L\to\infty}\frac{1}{\beta_L}
\sum_{n\in\Lambda_L,~|n|\leq M}\int_{-\infty}^{-1}\rho(x)dx=0.
\end{equation}
Now, 
\begin{align}
\label{ap1}
 \sum_{n\in\Lambda_L,~|n|>M}\int_{-\infty}^{-a_n^{-1}\epsilon}\rho(x)dx&=
\sum_{n\in\Lambda_L,~|n|>M}a_n^{-1}\int_{-\infty}^{-\epsilon}\rho(a_n^{-1}t)dt\\
&=\sum_{n\in\Lambda_L,~|n|>M}a_n^{(\delta-1)}\frac{\delta-1}{2}\int_{-\infty}^{-\epsilon}\frac{1}{|t|^{\delta}}dt\nonumber\\
&=\frac{\epsilon^{1-\delta}}{2}\sum_{n\in\Lambda_L,~|n|>M}a_n^{(\delta-1)},~\delta>1.\nonumber
\end{align}
This equality gives,
\begin{equation}
 \label{apl2}
\lim_{L\to\infty}\frac{1}{\beta_L}
\sum_{n\in\Lambda_L,~|n|>M}\int_{-\infty}^{-a_n^{-1}\epsilon}\rho(x)dx=\frac{\epsilon^{1-\delta}}{2}.
\end{equation}
Using (\ref{apl1}) and (\ref{apl2}) in (\ref{ap0}), we have
\begin{equation}
 \label{apl3}
\lim_{L\to\infty}\frac{1}{\beta_L}\mathbb{E}^{\omega}(N^{\omega}_{-,L}(E))=\frac{\epsilon^{1-\delta}}{2}=\frac{1}{2~\epsilon^{(\delta-1)}}>0.
\end{equation}
A similar calculation with $\mathbb{E}^{\omega}(N^{\omega}_{+,L}(E))$ gives,
\begin{equation}
\label{apl4}
\lim_{L\to\infty}\frac{1}{\beta_L}\mathbb{E}^{\omega}(N^{\omega}_{+,L}(E))=\frac{(4d+\epsilon)^{1-\delta}}{2}=
\frac{1}{2~(4d+\epsilon)^{(\delta-1)}}>0.
\end{equation}
Now, using (\ref{apl3}) and (\ref{apl4}) from (\ref{a2}), we conclude the inequality
\begin{equation}
\label{r1}
\frac{1}{2} \frac{1}{(4d+\epsilon)^{(\delta-1)}}\leq
\varliminf_{L\to\infty}\frac{1}{\beta_L}\mathbb{E}^{\omega}(N^{\omega}_{L}(E))\leq
\varlimsup_{L\to\infty}\frac{1}{\beta_L}\mathbb{E}^{\omega}(N^{\omega}_{L}(E))
\leq \frac{1}{2}\frac{1}{\epsilon^{(\delta-1)}}.
\end{equation}
If we define
\begin{equation}
\label{tilde}
\tilde{N}^{\omega}_{\pm,L}(E)=\#\{j:E_j\ge E,~E_j\in \sigma(A^{\omega}_{L\pm})\}, ~
\tilde{N}^{\omega}_{L}(E)=\#\{j:E_j\ge E,~E_j\in \sigma(H^{\omega}_{\Lambda_L})\}
\end{equation}
then the Min-max theorem and (\ref{a1}) together will give
\begin{equation}
\label{tilde1}
\tilde{N}^{\omega}_{-,L}(E)\leq \tilde{N}^{\omega}_{L}(E) \leq \tilde{N}^{\omega}_{+,L}(E).
\end{equation}
If $E=2d+\epsilon>2d$, for some $\epsilon>0$, a similar calculation results in
\begin{equation}
 \label{r2}
\frac{1}{2}\frac{1}{(4d+\epsilon)^{(\delta-1)}}\leq
\varliminf_{L\to\infty}\frac{1}{\beta_L}\mathbb{E}^{\omega}(\tilde{N}^{\omega}_{L}(E))\leq
\varlimsup_{L\to\infty}\frac{1}{\beta_L}\mathbb{E}^{\omega}(\tilde{N}^{\omega}_{L}(E))
\leq \frac{1}{2}\frac{1}{\epsilon^{(\delta-1)}}.
\end{equation}
The inequalities (\ref{r1}) and (\ref{r2}) together prove the Theorem \ref{th1}. \qed \\\\
{\bf Proof of Corollary \ref{cor3}:}\\
Since $H^{\omega}_{\Lambda_L}$ is a matrix of order $(2L+1)^d$, we have $\#\sigma(H^{\omega}_{\Lambda_L})=(2L+1)^d$. If
$M_1<-2d$ and $M_2>2d$ then,
\begin{equation}
 \label{count}
\#\bigg\{\sigma(H^{\omega}_{\Lambda_L})\cap(-\infty,M_1]\bigg\}~+
~\#\bigg\{\sigma(H^{\omega}_{\Lambda_L})\cap(M_1,M_2)\bigg\}~+~\#\bigg\{\sigma(H^{\omega}_{\Lambda_L})\cap[M_2,\infty)\bigg\}
=(2L+1)^d.
\end{equation}
Since
\begin{align}
\label{1}
 \frac{1}{(2L+1)^d}\mathbb{E}^{\omega}\big\{\sigma(H^{\omega}_{\Lambda_L})\cap(-\infty,M_1]\big\}&=
\frac{\beta_L}{(2L+1)^d}\frac{1}{\beta_L}\mathbb{E}^{\omega}(N^{\omega}_{L}(M_1)),
\end{align}
and from (\ref{r1}) and Hypothesis \ref{hy} we have 
$$\varlimsup_{L\to\infty}\frac{1}{\beta_L}\mathbb{E}^{\omega}(N^{\omega}_{L}(M_1))<\infty,~~and~~
~~\lim_{L\to\infty}\frac{\beta_L}{(2L+1)^d}=0,$$
the following limit holds
\begin{equation}
 \label{o1}
\lim_{L\to\infty}\frac{1}{(2L+1)^d}\mathbb{E}^{\omega}\big\{\sigma(H^{\omega}_{\Lambda_L})\cap(-\infty,M_1]\big\}=0.
\end{equation}
Similarly, using (\ref{r2}) we get
\begin{equation}
 \label{02}
\lim_{L\to\infty}\frac{1}{(2L+1)^d}\mathbb{E}^{\omega}\big\{\sigma(H^{\omega}_{\Lambda_L})\cap[M_2,\infty)\big\}=0.
\end{equation}
 Using the inequalities (\ref{count}), (\ref{o1}) and (\ref{02}), we see that for any interval $(M_1,M_2)$ containing $[-2d,2d]$ 
$$
\lim_{L\to\infty}\frac{1}{(2L+1)^d}\mathbb{E}^{\omega}\big(\# \big\{\sigma(H^{\omega}_{\Lambda_L})\cap (M_1,M_2)\big\}\big)=1.
$$
\qed\\
{\bf Corollary \ref{cor}:}\\
If $M_1<-2d$ then from $(\ref{de3})$ we have
\begin{align}
 \gamma_L(-\infty,M_1]&=\frac{1}{\beta_L}\mathbb{E}^{\omega}\bigg(Tr \big(E_{H^{\omega}_{\Lambda_L}}(-\infty,M_1]\big)\bigg)\\
&=\frac{1}{\beta_L}\mathbb{E}^{\omega}\big(N_L^{\omega}(M_1)\big)\qquad(using~~(\ref{de1})).\nonumber
\end{align}
This equality together with (\ref{r1}) gives
\begin{equation}
\label{u1}
 \varlimsup_{L\to\infty}\gamma_L(-\infty,M_1]\leq \frac{1}{2~(-2d-M_1)^{\delta-1}}\qquad(using~~\epsilon=-2d-M_1).
\end{equation}
Similarly, for $M_2>2d$, using (\ref{r2}), we get
\begin{equation}
 \label{u2}
 \varlimsup_{L\to\infty}\gamma_L[M_2,\infty)\leq \frac{1}{2~(M_2-2d)^{\delta-1}}~~~~~(using~~\epsilon=M_2-2d).
\end{equation}
(\ref{u1}) and (\ref{u2}) together proves (\ref{vague}).\\\\
Let $J=[E_1,E_2]\subset(-\infty,-2d)$ with $|J|>4d$, set
 $E_1=-2d-\epsilon_1$, $E_2=-2d-\epsilon_2$ such that $\epsilon_1-\epsilon_2>4d$. Then,
\begin{align}
 \gamma_L(J)&=\frac{1}{\beta_L}\mathbb{E}^{\omega}\big(N_L^{\omega}(E_2)\big)
-\frac{1}{\beta_L}\mathbb{E}^{\omega}\big(N_L^{\omega}(E_1)\big)\\
&\ge\frac{1}{\beta_L}\mathbb{E}^{\omega}\big(N_{+,L}^{\omega}(E_2)\big)-
\frac{1}{\beta_L}\mathbb{E}^{\omega}\big(N_{-,L}^{\omega}(E_1)\big)~~(using~~(\ref{a2})).\nonumber
\end{align}
Therefore, (\ref{apl4}) and (\ref{apl3}) give (\ref{nontri}), namely
$$
\varliminf_{L\to\infty}\gamma_L(J)\ge 
\frac{1}{2}\bigg[\frac{1}{(4d+\epsilon_2)^{(\delta-1)}}-\frac{1}{\epsilon_1^{(\delta-1)}}\bigg]>0.
$$
Similar result holds even when $J\subset(2d,\infty)$ with $|J|>4d$. \qed \\\\
{\bf Proof of Corollary \ref{convague}:}\\
From (\ref{vague}) we have
\begin{equation}
 \label{u3}
\sup_L\gamma_L\big((-\infty,M_1]\cup[M_2,\infty)\big)<\infty.
\end{equation}
We write $\mathbb{R}\setminus(M_1,M_2)=\bigcup_n A_n$, countable union of compact sets.
Now, $\gamma_L\upharpoonright_{A_n}$ (restriction of $\gamma_L$ to $A_n$) admits a weakly convergence subsequence by 
Banach-Alaoglu Theorem. Then, by a diagonal argument we select a subsequence of $\{\gamma_L\}$ which converges vaguely to a non-trivial
measure, say $\gamma$ on $\mathbb{R}\setminus(M_1,M_2)$.

\noindent The non-triviality of $\gamma$ is given by the fact that if $J\subset\mathbb{R}\setminus(M_1,M_2)$ is an interval such that
$4d<|J|<\infty$ then from (\ref{nontri}) we get
$$
\inf_L\gamma_L(J)>0.
$$
\qed \\
Before we proceed to the proof of Theorem \ref{th2}, we prove the following lemma.
\begin{lemma}
 \label{lem}
Let $\{X_n\}$ be sequence of random variables on a probability space $\big(\Omega, \mathcal{B}, \mathbb{P}\big)$ satisfying
$$
\sum_{n=1}^\infty\mathbb{P}\big(\omega: |X_n(\omega)-X(\omega)|>\epsilon\big)<\infty, ~~\epsilon>0.
$$
Then $X_n\xrightarrow{n\to\infty} X$ a.e. $\omega$.
\end{lemma}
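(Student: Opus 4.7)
The plan is to recognize this as an essentially standard consequence of the first Borel--Cantelli lemma, exploiting that almost sure convergence of a sequence is captured by countably many ``$\epsilon=1/k$'' events.

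First I would fix $\epsilon>0$ and set $A_n(\epsilon)=\{\omega:|X_n(\omega)-X(\omega)|>\epsilon\}$. The hypothesis reads $\sum_n\mathbb{P}(A_n(\epsilon))<\infty$, so the first Borel--Cantelli lemma yields $\mathbb{P}\bigl(\limsup_n A_n(\epsilon)\bigr)=0$. In words, for almost every $\omega$ only finitely many of the $A_n(\epsilon)$ occur, so there exists $N=N(\omega,\epsilon)$ with $|X_n(\omega)-X(\omega)|\leq\epsilon$ for all $n\geq N$.

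Next I would upgrade this from ``a.s.\ for each fixed $\epsilon$'' to ``a.s.\ for all $\epsilon>0$ simultaneously''. Apply the previous step along the countable sequence $\epsilon=1/k$, $k\in\mathbb{N}$, and set
$$\Omega_0=\Omega\setminus\bigcup_{k=1}^{\infty}\limsup_n A_n(1/k).$$
By countable subadditivity $\mathbb{P}(\Omega_0)=1$. If $\omega\in\Omega_0$, then for every $k$ there exists $N(\omega,k)$ such that $|X_n(\omega)-X(\omega)|\leq 1/k$ whenever $n\geq N(\omega,k)$, which is exactly the statement $X_n(\omega)\to X(\omega)$.

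There is no real obstacle here: the argument is a textbook application of Borel--Cantelli combined with a diagonalization over $\epsilon=1/k$. The only minor point worth verifying explicitly is that the sets $A_n(\epsilon)$ are measurable, which follows from the measurability of $X_n$ and $X$.
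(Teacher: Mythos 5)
Your argument is correct and is essentially identical to the paper's own proof: both fix $\epsilon$, apply the first Borel--Cantelli lemma to the events $A_n(\epsilon)=\{|X_n-X|>\epsilon\}$, and then intersect over the countable family $\epsilon=1/k$ to obtain a full-measure set on which $X_n\to X$. No gaps; nothing further to add.
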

\begin{proof} Define
$$
A_n(\epsilon)=\big\{\omega: |X_n(\omega)-X(\omega)|>\epsilon\big\}.
$$
If
$$
\sum_{n=1}^\infty\mathbb{P}\big(A_n(\epsilon)\big)=\sum_{n=1}^\infty\mathbb{P}\big(\omega: |X_n(\omega)-X(\omega)|>\epsilon\big)<\infty,
$$
then the Borel-Cantelli lemma gives
$$
\mathbb{P}\big(A(\epsilon)\big)=0, ~where~~A(\epsilon)=\bigcap_{n=1}^{\infty}\bigcup_{m=n}^\infty A_n(\epsilon).
$$
Now we have,
$$
\mathbb{P}\big(B(\epsilon)\big)=1~where~~B(\epsilon)=\bigcup_{n=1}^{\infty}\bigcap_{m=n}^\infty A_n(\epsilon)^c.
$$
For each $N\in\mathbb{N}$, we define
$$
B_N=B(1/N)~~and~~B=\bigcap_{N=1}^\infty B_N~ then ~\mathbb{P}(B)=1,~since~\mathbb{P}(B_N)=1.
$$
For any $\delta>0$, we can choose $M\in\mathbb{N}$ such that $\frac{1}{M}<\delta$. If $\omega\in B$ then, $\forall~N\in\mathbb{N}$ $\omega\in B_N$ From the construction of $B_M$, there exist a $K\in\mathbb{N}$ such that
$$
|X_m(\omega)-X(\omega)|\leq \frac{1}{M}<\delta~~\forall~m\ge K.
$$
So we have, 
$$
X_m\xrightarrow{m\to\infty} X~~on~~B~~with~~\mathbb{P}(B)=1.
$$
Hence the lemma. 
\end{proof}

\noindent {\bf Proof of Theorem \ref{th2}:}\\
Let $E=-2d-\epsilon$ for some $\epsilon>0$ and define
\begin{equation}
 \label{rand}
X_n(\omega):=\chi_{\{\omega: q_n(\omega)\leq -a_n^{-1}\epsilon\}}.
\end{equation}
Since $\{q_n\}_n$ are i.i.d., $\{X_n\}$ is a sequence of independent random variables. Now, from (\ref{a4}) we have
\begin{equation}
 \label{indran}
N^{\omega}_{-,L}(E)=\sum_{n\in\Lambda_L}X_n(\omega).
\end{equation}
We want to prove the following:
\begin{equation}
 \label{aecon}
\lim_{L\to\infty}\frac{N^{\omega}_{-,L}(E)-\mathbb{E}^{\omega}\big(N^{\omega}_{-,L}(E)\big)}{\beta_L}=0~~a.e~\omega.
\end{equation}
In view of Lemma \ref{lem}, in order to prove the above equation, it is enough to show 
\begin{equation}
 \label{usbo}
\sum_{L=1}^\infty\mathbb{P}\bigg(\omega:\frac{\big|N^{\omega}_{-,L}(E)-\mathbb{E}^{\omega}\big(N^{\omega}_{-,L}(E)\big)\big|}{\beta_L}
>\eta\bigg)<\infty~~\forall~~\eta>0.
\end{equation}
using Chebyshev's inequality we get
\begin{equation}
\label{dh}
\sum_{L=1}^\infty\mathbb{P}\bigg(\omega:\frac{\big|N^{\omega}_{-,L}(E)-\mathbb{E}^{\omega}\big(N^{\omega}_{-,L}(E)\big)\big|}{\beta_L}
>\eta\bigg)\leq\sum_{L=1}^\infty\frac{1}{\eta^2\beta_L^2} \mathbb{E}^{\omega}\bigg(N^{\omega}_{-,L}(E)-
\mathbb{E}^{\omega}\big(N^{\omega}_{-,L}(E)\big)\bigg)^2.
\end{equation}
We proceed to estimate the RHS of the above inequality.
\begin{align}
\mathbb{E}^{\omega}\bigg(N^{\omega}_{-,L}(E)-\mathbb{E}^{\omega}\big(N^{\omega}_{-,L}(E)\big)\bigg)^2 &=
\mathbb{E}^{\omega}\bigg(\sum_{n\in\Lambda_L}\big(X_n(\omega)-\mathbb{E}^{\omega}
\big(X_n(\omega)\big)\bigg)^2\nonumber\\
&=\sum_{n\in\Lambda_L}\mathbb{E}^{\omega}\bigg(X_n(\omega)-\mathbb{E}^{\omega}
\big(X_n(\omega)\big)\bigg)^2~~(X_n~~are~independent)\nonumber\\
&=\sum_{n\in\Lambda_L}
\bigg[\mathbb{E}^{\omega}(X_n^2)-\big(\mathbb{E}^{\omega}(X_n)\big)^2\bigg]\nonumber\\
&\leq\sum_{n\in\Lambda_L}\mathbb{E}^{\omega}(X_n^2)\nonumber\\
&=\sum_{n\in\Lambda_L}\mathbb{E}^{\omega}(X_n)\qquad\ (since\ X_n^2=X_n)\nonumber\\
&=\mathbb{E}^{\omega}\big(N^{\omega}_{-,L}(E)\big)\qquad(using~~(\ref{indran}))\nonumber
\end{align}
Using the above estimate in (\ref{dh}) we get,
\begin{align}
\label{fin}
& \sum_{L=1}^\infty\mathbb{P}\bigg(\omega:\frac{\big|N^{\omega}_{-,L}(E)-\mathbb{E}^{\omega}\big(N^{\omega}_{-,L}(E)\big)\big|}{\beta_L}
>\eta\bigg) \leq \frac{1}{\eta^2}\sum_{L=1}^\infty\frac{1}{\beta_L^2}\mathbb{E}^\omega\big(N^{\omega}_{-,L}(E)\big)\\
&\qquad\qquad=\frac{1}{\eta^2}\sum_{L=1}^\infty\frac{1}{\beta_L}\frac{1}{\beta_L}\mathbb{E}^{\omega}\big(N^{\omega}_{-,L}(E)\big)\nonumber\\
&\qquad\qquad\leq \frac{C}{\eta^2}\sum_{L=1}^\infty\frac{1}{\beta_L}\qquad\qquad(using~~(\ref{apl3}))\nonumber\\
&\qquad\qquad\backsimeq\sum_{L=1}^\infty\frac{1}{L^{d-\alpha(\delta-1)}}\qquad(using~(\ref{od})).\nonumber
\end{align}
As we have assumed in the theorem that $0<\alpha<\frac{1}{2}$, $1<\delta<\frac{1}{2\alpha}$ and $d\ge2$, we have $d-\alpha(\delta-1)>1$.
Thus, (\ref{usbo}) follows from (\ref{fin}).\\
Therefore, from (\ref{aecon}), for a.e. $\omega$, we have
\begin{align}
\label{l1}
 \lim_{L\to\infty}\frac{1}{\beta_L}N^{\omega}_{-,L}(E)&=
\lim_{L\to\infty}\frac{1}{\beta_L}\mathbb{E}^{\omega}\big(N^{\omega}_{-,L}(E)\big)\\
&=\frac{1}{2~\epsilon^{(\delta-1)}}~~(using~~(\ref{apl3}))\nonumber\\
&=\frac{1}{2~(-2d-E)^{(\delta-1)}}~~(E=-2d-\epsilon).\nonumber
\end{align}
A similar calculation gives, for a.e. $\omega$,
\begin{align}
\label{l2}
 \lim_{L\to\infty}\frac{1}{\beta_L}N^{\omega}_{+,L}(E)&=
\lim_{L\to\infty}\frac{1}{\beta_L}\mathbb{E}^{\omega}\big(N^{\omega}_{+,L}(E)\big)\\
&=\frac{1}{2~(4d+\epsilon)^{(\delta-1)}}~~(using~~(\ref{apl4}))\nonumber\\
&=\frac{1}{2~(2d-E)^{(\delta-1)}}~~(E=-2d-\epsilon).\nonumber
\end{align}
The inequalities (\ref{l1}), (\ref{l2}) together with (\ref{a2}) give,
for $E<-2d$ for a.e. $\omega$,
\begin{equation}
\label{2thr}
 \frac{1}{2}\frac{1}{(2d-E)^{(\delta-1)}}\leq \varliminf_{L\to\infty}\frac{1}{\beta_L}N^{\omega}_{L}(E)
\leq \varlimsup_{L\to\infty}\frac{1}{\beta_L}N^{\omega}_{L}(E)\leq \frac{1}{2}\frac{1}{(-2d-E)^{(\delta-1)}}.
\end{equation}
For $E>2d$ we compute $\tilde{N}^{\omega}_{\pm,L}(E)$ (as in (\ref{tilde})) exactly in the same way as give above. Thus, we can prove 
that, for a.e. $\omega$,
\begin{align*}
\lim_{L\to\infty}\frac{1}{\beta_L}\tilde{N}^{\omega}_{+,L}(E) &=
\lim_{L\to\infty}\frac{1}{\beta_L}\mathbb{E}^\omega\big(\tilde{N}^{\omega}_{+,L}(E)\big)\\
&=\frac{1}{2~(E-2d)^{(\delta-1)}}
\end{align*}
and
\begin{align*}
 \lim_{L\to\infty}\frac{1}{\beta_L}\tilde{N}^{\omega}_{-,L}(E) &=
\lim_{L\to\infty}\frac{1}{\beta_L}\mathbb{E}^\omega\big(\tilde{N}^{\omega}_{-,L}(E)\big)\\
&=\frac{1}{2~(2d+E)^{(\delta-1)}}.
\end{align*}
These equalities, together with (\ref{tilde1}) give the following.
For $E>2d$, a.e. $\omega$,
\begin{equation}
\label{lth}
\frac{1}{2} \frac{1}{(2d+E)^{(\delta-1)}}\leq\varliminf_{L\to\infty}\frac{1}{\beta_L}\tilde{N}^{\omega}_{L}(E)
\leq \varlimsup_{L\to\infty}\frac{1}{\beta_L}\tilde{N}^{\omega}_{L}(E)\leq \frac{1}{2}\frac{1}{(E-2d)^{(\delta-1)}}.
\end{equation}
\qed


\begin{thebibliography}{99}
\bibitem{PW} Anderson, P.W: \textsl{Absence of diffusion in certain random lattices}, Phys. Rev. {\bf 109}, 1492-1505, 1958.



\bibitem{AM} Aizenman, Michael; Molchanov, Stanislav: \textsl{Localization at large disorder and at extreme energies: 
an elementary derivation}, Commun. Math. Phys. {\bf157(2)}, 245-278, 1993.


\bibitem{AW} Aizenman, Michael; Warzel, Simone: \textsl{The Canopy Graph and Level Statistics for Random Operators on Trees},
Mathematical Physics, Analysis and Geometry, {\bf 9(4)}, 291-333, 2006.


\bibitem{CL} Carmona, Ren\'{e}; Lacroix, Jean: \textsl{Spectral theory of random Schrodinger operators}, Boston,
Birkhauser, 1990.

\bibitem{JFA} Combes, Jean-Michel; Germinet, Fran\c{c}ois; Klein, Abel: 
\textsl{Generalized Eigenvalue-Counting Estimates for the Anderson Model},
J Stat Physics {\bf 135(2)}, 201-216, 2009.




\bibitem{MM} Demuth, Michael; Krishna, M: \textsl{Determining Spectra in Quantum Theory},
 Progress in Mathematical Physics. {\bf 44}, Birkh\"{a}user, Boston, 2004.

\bibitem{DM} Dolai, Dhriti; Krishna, M:  \textsl{Level Repulsion for a class of decaying random potentials}, 
Markov Processes and Related Fields (to be appear), arXiv:1305.5619[math.SP].

\bibitem{DJ} Daley, D.J; Vere-Jones: \textsl{An Introduction to the Theory of Point Processes {\bf II},
General theory and structure}, Springer, New York, 2008.


\bibitem{AAM} Figotin, Alexander; Germinet, Fran\c{c}ois; Klein, Abel; M\"{u}ller, Peter: 
\textsl{Persistence of Anderson localization in Schr\"{o}dinger operators with decaying random potentials},
 Ark. Mat. {\bf 45(1)}, 15-30, 2007.

\bibitem{GK} Germinet, Fran\c{c}ois; Klopp, Fr\'{e}d\'{e}ric: \textsl{Spectral statistics for the discrete Anderson model in the 
localized regime}, 
Spectra of random operators and related topics, 11-24, RIMS Kôkyûroku Bessatsu, B27, Res. Inst. Math. Sci. (RIMS), Kyoto, 2011.

\bibitem{GMT} Gordon, A. Ya; Molchanov, S. A; Tsagani, B: 
\textsl{Spectral theory of one-dimensional Schrödinger operators with strongly fluctuating potentials}
 Funct. Anal. Appl. {\bf 25(3)}, 236-238, 1991.

\bibitem{GJMS}  Gordon, Y. A; Jak\v{s}i\'{c}, V; Mol\u{c}anov, S; Simon, B: 
\textsl{Spectral properties of random Schr\"{o}dinger operators with unbounded potentials},
Comm. Math. Phys. {\bf 157(1)}, 23-50, 1993.

\bibitem{JL} Jak\v{s}i\'{c}, Vojkan; Last, Yoram: \textsl{Spectral structure of Anderson type Hamiltonians}, Invent. Math,
 {\bf141(3)}, 561-577, 2000.

\bibitem{KT} Kato, Tosio: \textsl{Perturbation theory for Linear operators},
 Classics in Mathematics. Springer-Verlag, Berlin, 1995.



\bibitem{KR} Krishna M: \textsl{Anderson model with decaying randomness existence of extended
states}, Proc. Indian Acad. Sci. Math. Sci. {\bf 100}, 285-294 1990.

\bibitem{K} Krishna, M: \textsl{Continuity of intregrated density of states-independent randomness},
Proc. Ind. Acad. Sci. {\bf 117(3)}, 401-410, 2007.

\bibitem{KN} Killip, Rowan; Nakano, Fumihiko: \textsl{Eigenfunction Statistics in the Localized Anderson Model}, 
Ann. Henri Poincare {\bf 8(1)}, 27-36, 2007.

\bibitem{KotNak} Kotani, S; Nakano, Fumihiko:  \textsl{Level statistics of one-dimensional Schrödinger 
operators with random decaying potential}, Preprint, (2012).

\bibitem{WMJ} Kirsch, W; Krishna, M; Obermeit, J: \textsl{Anderson model with decaying randomness: mobility edge},
Math.Z. {\bf 235(3)}, 421-433, 2000.

\bibitem{KU} Kotani, S; Ushiroya, N:  \textsl{One-dimensional Schrodinger operators with random
decaying potentials}, Commun. Math. Phys. {\bf115(2)}, 247-266, 1988.

\bibitem{NM} Minami, Nariyuki: \textsl{Local Fluctuation of the Spectrum of a Multidimensional Anderson 
Tight Binding Model}, Commun. Math. Phys. {\bf 177(3)}, 709-725, 1996.




\bibitem{RS} Reed, Michael; Simon, Barry: \textsl{Method of modern mathematical physics {\bf I},
 Functional Analysis}, Academic Press, 1978.

\bibitem{BS} Simon, Barry: \textsl{Trace ideals and their applications},  Mathematical Surveys and Monographs, 120, 
American Mathematical Society, Providence, RI, 2005. viii+150 pp.


\bibitem{SW} Simon, Barry; Wolff, Tom: \textsl{Singular continuos spectrum under rank one perturbations and 
localization for random Hamiltonians}, Comm. Pure and Appl. Math.  {\bf 39(1)}, 75-90, 1986.
\end{thebibliography}
\end{document}